\newtheorem{thm}{Theorem}[section]
\newtheorem{conj}[thm]{Conjecture}
\newtheorem{cor}[thm]{Corollary}
\newtheorem{prop}[thm]{Proposition}
\newtheorem{question}{Question}
\newtheorem{problem}{Problem}
\newtheorem{claim}{Claim}
\begin{document}

\begin{frontmatter}

\title{On list 3-dynamic coloring of near-triangulations\\}


\author[1]{Ruijuan Gu}
\ead{millet90@163.com}

\author[2]{Seog-Jin Kim}
\ead{skim12@konkuk.ac.kr}

\author[3]{Yulai Ma\corref{cor1}}
\ead{ylma92@163.com}

\author[3]{Yongtang Shi}
\ead{shi@nankai.edu.cn}
\cortext[cor1]{Corresponding author}

\address[1]{Sino-European Institute of Aviation Engineering, Civil Aviation University of China, Tianjin 300300, China}
\address[2]{Department of Mathematics Educations, Konkuk University, Republic of Korea}
\address[3]{Center for Combinatorics and LPMC, Nankai University,  Tianjin 300071, China}


\begin{abstract}
An $r$-dynamic $k$-coloring of a graph $G$ is a proper $k$-coloring such that for any vertex $v$, there are at least $\min\{r,\deg_G(v) \}$ distinct colors in $N_G(v)$. The {\em $r$-dynamic chromatic number} $\chi_r^d(G)$ of a graph $G$ is the least $k$ such that there exists an $r$-dynamic $k$-coloring  of $G$.  The {\em list $r$-dynamic chromatic number} of a graph $G$ is denoted by $ch_r^d(G)$.
Loeb et al. \cite{LMRW18} showed that $ch_3^d(G)\leq 10$ for every planar graph $G$, and there is a planar graph $G$ with  $\chi_3^d(G)= 7$. 

In this paper, we study a special class of planar graphs which have better upper bounds of $ch_3^d(G)$.
We prove  that $ch_3^d(G) \leq 6$ if $G$ is a planar graph which is near-triangulation, 
where a near-triangulation is a planar graph whose bounded faces are all 3-cycles.
\end{abstract}

\begin{keyword}
list $r$-dynamic coloring \sep planar graphs \sep triangulation \sep near-triangulation



\end{keyword}

\end{frontmatter}
\section{Introduction}

\baselineskip 17.5pt

Let $k$ be a positive integer.
A proper $k$-coloring $\phi: V(G) \rightarrow \{1, 2, \ldots, k \}$ of a graph $G$ is an assignment of colors to the vertices of $G$ so that any two adjacent vertices  receive distinct colors.
The {\em chromatic number} $\chi(G)$ of a graph $G$ is the least $k$ such that there exists a proper $k$-coloring of $G$.
An $r$-dynamic $k$-coloring of a graph $G$ is a proper $k$-coloring $\phi$  such that  for each vertex $v\in V(G)$,
either the number of distinct colors in its neighborhood is at least $r$ or the colors in its neighborhood are all distinct, that is, $|\phi(N_{G}(v))|=\min\{r,\deg_G(v)\}$.
The {\em $r$-dynamic chromatic number} $\chi_r^d(G)$ of a graph $G$ is the least $k$ such that there exists an $r$-dynamic $k$-coloring  of $G$.


A {\em list assignment} on a graph $G$ is a function
$L$ that assigns each vertex $v$ a set $L(v)$ which is
a list of available colors at $v$.
For a list assignment $L$ of a graph $G$, we say $G$ is {\em $L$-colorable}
if there exists a proper coloring $\phi$ such that $\phi(v) \in L(v)$ for every $v \in V(G)$.
A graph $G$ is said to be {\em $k$-choosable} if for any list assignment $L$ such that
$|L(v)| \geq k$ for every vertex $v$, $G$ is $L$-colorable.

For a list assignment $L$ of $G$, we say that $G$ is {\em $r$-dynamically $L$-colorable}
if there exists an  $r$-dynamic coloring  $\phi$ such that $\phi(v) \in L(v)$ for every $v \in V(G)$.
A graph $G$ is {\em  $r$-dynamically $k$-choosable} if for any list assignment $L$ with
$|L(v)| \geq k$ for every vertex $v$, $G$ is $r$-dynamically $L$-colorable.
The  {\em list $r$-dynamic chromatic number}  $ch_r^d(G)$ of a graph $G$ is the least $k$ such that $G$ is  $r$-dynamically $k$-choosable.

An interesting property of dynamic coloring is as follows.
$$\chi(G)\leq\chi_2^d(G)\leq\cdots\leq\chi_{\Delta}^d(G)=\chi(G^2),$$
where $G^2$ is the square of the graph $G$.

The dynamic coloring was first introduced in \cite{LMP03,M01}.  On the other hand, Wegner \cite{W77} conjectured that if $G$ is a planar graph, then

$$\chi_\Delta^d(G)\leq\left\{
\begin{array}{lcl}
7, && {if\ \Delta(G)=3;}\\
\Delta(G)+5, &&{if\ 4\leq\Delta(G)\leq7;}\\
\lfloor\frac{3\Delta(G)}{2}\rfloor+1, &&{if\ \Delta(G)\geq8.}
\end{array}\right.$$

Lai et al. \cite{SFCSL14} posed a similar conjecture about dynamic coloring of planar graphs as follows.

\begin{conj}\label{con1}
Let $G$ be planar graph. Then

$$\chi_r^d(G)\leq\left\{
\begin{array}{lcl}
r+3, && {if\ 1\leq r\leq2;}\\
r+5, &&{if\ 3\leq r\leq7;}\\
\lfloor\frac{3r}{2}\rfloor+1, &&{if\ r\geq8.}
\end{array}\right.$$
\end{conj}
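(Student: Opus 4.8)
The plan is to attack the three ranges of $r$ separately, since each calls for a different mechanism, and to organize the argument around a minimal counterexample analyzed by a discharging scheme built on Euler's formula. Throughout I would exploit the monotonicity chain $\chi(G)\le\chi_2^d(G)\le\cdots\le\chi_\Delta^d(G)=\chi(G^2)$ recorded above, which both caps every $\chi_r^d(G)$ by the square chromatic number and lets colorings be transferred upward in $r$. The two small cases are essentially known territory. For $r=1$ a $1$-dynamic coloring is just a proper coloring, since the condition $|\phi(N(v))|\ge\min\{1,\deg(v)\}$ is vacuous, so $\chi_1^d(G)=\chi(G)\le 4=r+3$ by the Four Color Theorem. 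For $r=2$ the target $\chi_2^d(G)\le 5=r+3$ follows from a standard reducible-configuration analysis: a minimal planar counterexample has minimum degree at least $3$ and no adjacent pair of $3$-vertices, and five colors leave enough slack to satisfy the single extra requirement that each vertex see two colors in its neighborhood.

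The middle range $3\le r\le 7$, where the claimed bound is $r+5$, is the heart of the problem and where I expect most of the work. Here I would take a counterexample $G$ minimal in $|V(G)|$, fix a plane embedding, delete a carefully chosen vertex $v$ (of low degree, or lying in a sparse neighborhood), and obtain an $r$-dynamic $(r+5)$-coloring of $G-v$ by minimality. The goal is to re-insert $v$: with $r+5$ colors available, a vertex of degree $d$ forbids at most $d$ colors through adjacency and must additionally witness $\min\{r,d\}$ distinct colors among its neighbors, and one shows the palette suffices unless $v$ sits in a dense local configuration. The combinatorial core is thus a list of reducible configurations, eliminated by discharging: assign initial charge $\deg(x)-6$ to each vertex $x$ and $2\deg(f)-6$ to each face $f$, so that the total is $-12$ by Euler's formula, then design rules that force a reducible configuration to occur. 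For $r=3$ the known planar example with $\chi_3^d(G)=7$ confirms the target $8$ is close to best possible, so extracting exactly the additive constant $5$ (rather than a weaker $r+O(1)$) is the delicate point, as it pins down how much charge may be routed to light vertices.

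For $r\ge 8$ the target $\lfloor 3r/2\rfloor+1$ is genuinely Wegner-type, and this is the main obstacle: the coefficient $3/2$ is precisely the constant that is hard even in the square-coloring case $r=\Delta$. When $\deg(v)\le r$ the dynamic condition forces $N(v)$ to be rainbow-colored, a local distance-two constraint, so high-degree vertices act as star centers whose leaves must receive distinct colors. The plan is to combine planar sparsity---few vertices of very high degree and control on how they cluster---with a greedy or assignment argument bounding the colors needed near such centers by $\lfloor 3r/2\rfloor+1$, adapting the best known $\tfrac{3}{2}\Delta+O(1)$ bounds for $\chi(G^2)$ by truncating the distance-two requirement at degree $r$. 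The difficulty is that these methods typically lose additive constants, so matching the exact floor bound uniformly over all $r\ge 8$ is the step I am least confident can be carried through without new ideas; indeed, the statement is a conjecture and remains open precisely in this regime.
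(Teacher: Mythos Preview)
The statement you are addressing is Conjecture~\ref{con1} in the paper, and the paper does \emph{not} prove it: it is recorded there as an open conjecture of Lai et al., with only the remark that it is known for planar graphs of girth at least~$6$. There is therefore no proof in the paper to compare your proposal against.

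Your write-up is not a proof but a research outline, and you yourself acknowledge this in the final sentence. The $r=1$ case is indeed the Four Color Theorem, and the $r=2$ bound $\chi_2^d(G)\le 5$ is a known result (Kim--Lee--Park, cited in the paper as~\cite{KLP13}), so those parts are fine as citations. But for $3\le r\le 7$ your discharging sketch is only a template: the paper itself notes that for $r=3$ the best known general bound is $ch_3^d(G)\le 10$ (Loeb et al.), not the conjectured $8$, so ``five colors of slack'' is precisely what current methods fail to deliver, and your outline gives no new reducible configurations or discharging rules that would close this gap. For $r\ge 8$ you correctly identify the obstruction as Wegner's conjecture in disguise; matching the exact $\lfloor 3r/2\rfloor+1$ is open even for $r=\Delta$, and nothing in your plan circumvents that. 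In short, the proposal is an honest summary of the state of the problem rather than a proof, and no comparison with the paper's argument is possible because the paper offers none.
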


Lai et al. \cite{SLW16} showed the conjecture \ref{con1} is true for planar graphs with girth at least $6$. Several results about $r$-dynamic coloring were published in \cite{AKKNO18,CLLLTZ18,JJOW16,KMW15,KLP13,LMRW18}. In term of maximum average degree, there is a result published in \cite{KP18}. And some special classes of graphs are also investigated, such as $K_4$-minor free graphs \cite{SFCSL14}, $K_{1,3}$-free graphs \cite{LL17} and bipartite graphs \cite{E10}.

Loeb et al. \cite{LMRW18} showed $ch_3^d(G)\leq 10$ if $G$ is a planar graph. 
On the other hand, there is a planar graph $F$ such that $\chi_3^d(F) = 7$.  Thus the following is an interesting problem to study.

\begin{problem} \label{P-dynamic}
What is $\chi_3^d(G)$ if $G$ is a planar graph?  And what is 
$ch_3^d(G)$ if $G$ is a planar graph?
\end{problem}
Currently, we have the following bounds.
\begin{equation} \label{dynamic-gap}
7 \leq \mbox{ max } \{\chi_3^d(G) : G \mbox{ is a planar graph }\} \leq 10.
\end{equation}
It is natural to consider a special class of planar graphs for Problem \ref{P-dynamic}.
Recently, Asayama et al. \cite{AKKNO18} showed that $\chi_3^d(G)\leq 5$ if $G$ is a triangulated planar graph, and the upper bound is sharp.  But, 
we do not know yet whether $ch_3^d(G) \leq 5$ or not, if $G$ is a triangulated planar graph.  

Since there is a gap (\ref{dynamic-gap}) for general case of planar graphs, it  would be interesting to study list  $3$-dynamic chromatic number 
$ch_3^d(G)$ for a special class of planar graphs.  
In this paper, we consider a near-triangulation where
a {\em near-triangulation} is a planar graph whose bounded faces are all 3-cycles. Note that a triangulated planar graph is a special case of a near-triangulation.
First, we show the following theorem.

\begin{thm}\label{NT}
If $G$ is a near-triangulation, then $ch_3^d(G)\leq 6$.
\end{thm}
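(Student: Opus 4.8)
The natural approach is minimum-counterexample (discharging-flavoured) induction on $|V(G)|$. Suppose $G$ is a near-triangulation with a list assignment $L$, $|L(v)|\ge 6$ for all $v$, that is not $3$-dynamically $L$-colorable, with $|V(G)|$ minimum. The first task is to establish structural reductions. A near-triangulation that is $2$-connected has its outer face bounded by a cycle $C$, and every internal vertex has degree $\ge 3$ with a wheel-like neighbourhood; so the only low-degree vertices sit on $C$. If $G$ has a vertex $v$ of degree $\le 2$, or more generally a vertex that is "reducible" — small degree, or a pair of adjacent small-degree vertices on the outer cycle — we delete it (or a small configuration), apply minimality to get a $3$-dynamic $L$-coloring of the smaller near-triangulation, and then extend. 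The extension step must check both properness and the dynamic condition: when we colour $v$ we need a colour in $L(v)$ avoiding $\deg(v)$ forbidden colours from neighbours, plus we must not destroy the dynamic condition at neighbours $u$ of $v$ (which requires $|\phi(N(u))|\ge\min\{3,\deg(u)\}$) — this is the usual source of the extra "$+$ a few" in the bound, and it is why $6$ rather than $5$.

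Second, and this is the crux, I would prove that a minimal counterexample cannot exist by showing every near-triangulation contains one of the reducible configurations. Here I would use Euler's formula / a discharging argument specialized to near-triangulations. Because all bounded faces are triangles, $|E| = 3|V| - 3 - b$ where $b$ is the number of vertices on the outer face (for a $2$-connected near-triangulation), so the graph is sparse on the boundary: $\sum_{v} (\deg v - 6) = -12 + (\text{boundary contribution})$, forcing many boundary vertices of degree $\le 4$ or $\le 5$, and in particular two boundary vertices of small degree that are close together. The delicate point is handling vertices of degree exactly $4$ or $5$ on the boundary: deleting such a vertex leaves a graph that is still a near-triangulation (one must check the outer face stays a union of triangles, possibly after also deleting/contracting an edge), and re-colouring it needs a free colour among $6$ minus at most $5$ neighbour colours — fine for properness, but one must also repair the dynamic condition at each neighbour, which may have lost a colour. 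I expect to need a case analysis on how many neighbours of $v$ are themselves "tight" (have exactly the minimum number of distinct colours in their neighbourhood); the standard trick is that a vertex in a triangulated region has $\ge 3$ neighbours forming a path in its link, so recolouring one of them is usually available.

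Third, I must deal with connectivity: if $G$ is disconnected or has a cut vertex, colour the blocks separately and identify at the cut vertex (a near-triangulation's blocks are again near-triangulations, and the dynamic condition at a cut vertex only constrains the union of its neighbourhoods, which can be met by permuting colours in one block since $|L|=6$ gives slack). So WLOG $G$ is $2$-connected with outer cycle $C = v_1\cdots v_b$. The smallest cases — $b=3$ (so $G$ is a triangulation on few vertices, or handle via the Asayama et al. bound $\chi_3^d \le 5$ lifted to lists with $6$ colours by a short direct argument) — form the base.

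The main obstacle, as I see it, is the simultaneous control of the dynamic condition at the neighbours of the reduced vertex: properness alone would give a much smaller bound, so the entire difficulty of proving $6$ (and not being able to push to $5$) lies in guaranteeing that after re-inserting a boundary vertex of degree $4$ or $5$ we can still certify $|\phi(N(u))| \ge \min\{3,\deg u\}$ for every affected $u$. I would isolate a clean lemma — something like "in a near-triangulation, a boundary vertex of degree $d\le 5$ together with its link admits a colouring of $v$ from any list of size $6$ that preserves dynamics at all neighbours" — proved by exploiting that the link of $v$ is a path, so at most two neighbours are 'endangered' and each has a triangle giving it a spare neighbour to recolour.
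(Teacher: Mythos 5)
Your overall skeleton (minimal counterexample, delete a low-degree vertex, extend the colouring, Euler-type counting) is the same as the paper's, but two of your concrete steps have genuine gaps. First, the counting claim. For a near-triangulation with outer cycle of length $t$ and $k$ interior vertices one has $e(G)=2t+3k-3$, hence $\sum_{v\in C}(\deg_G(v)-4)+\sum_{v\notin C}(\deg_G(v)-6)=-6$. This does \emph{not} force boundary vertices of degree at most $5$: interior vertices of degree $3$, $4$ or $5$ can absorb all of the negative charge (for instance, surround an inner cycle of length $3t$ by an outer $t$-cycle so that every boundary vertex has two boundary neighbours and four inner neighbours, then triangulate the inside), so there are near-triangulations in which \emph{every} boundary vertex has degree at least $6$. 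What Euler's formula actually forces — and what the paper reduces — is ``a boundary vertex of degree $\le 3$ \emph{or} an interior vertex of degree $\le 5$''. Your plan never reduces interior vertices of degree $4$ or $5$ (you assert the only low-degree vertices sit on $C$), yet this is precisely the harder reduction in the paper (its Claim 2): after deleting such a vertex one must add chords inside the resulting hole so the smaller graph is again a near-triangulation, and that re-triangulation is also what controls the colours appearing on the link.

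Second, your reduction of a boundary vertex $v$ of degree $4$ or $5$ is not sound as sketched. After deleting $v$, its neighbours form a path, and a $3$-dynamic colouring of the smaller graph may colour that path with only two colours (say $a,b,a,b,a$); then $|\phi(N_G(v))|=2<3$ and no choice of colour for $v$ repairs the dynamic condition at $v$ itself — a condition your outline never checks (you only address properness at $v$ and dynamics at its neighbours). Moreover, before boundary minimum degree $4$ is established (the paper's Claim 1), reinserting a boundary degree-$5$ vertex can face $5+2=7>6$ forbidden colours: two degree-$3$ boundary neighbours can each contribute an extra forbidden colour beyond the five properness constraints, so the ``at most two endangered neighbours, each repairable via a spare triangle'' lemma you hope for is not automatic. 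The paper avoids both difficulties by proving exactly the two reductions its counting needs: boundary vertices of degree $\le 3$ (where, after possibly adding the edge $u_1u_3$ in the outer face, the endangered-neighbour analysis bounds the forbidden colours by $5$) and interior vertices of degree $\le 5$ (where the hole is re-triangulated and at most two link vertices $w_i$ satisfy $\phi(w_{i-1})=\phi(w_{i+1})$, again giving at most $5$ forbidden colours). To repair your write-up you would essentially have to replace your reducible configurations and counting by these.
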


And  we obtain the following corollary.

\begin{cor} \label{T-main}
If $G$ is a triangulated planar graph, then $ch_3^d(G)\leq 6$.
\end{cor}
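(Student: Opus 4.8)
The plan is to prove Theorem~\ref{NT} by contradiction; Corollary~\ref{T-main} is then immediate, since a triangulated planar graph has all its faces equal to triangles and is in particular a near-triangulation. So let $G$ be a near-triangulation that is not $3$-dynamically $6$-choosable with $|V(G)|$ as small as possible, and let $L$ be a list assignment with $|L(v)|\ge 6$ for every $v$ for which $G$ has no $3$-dynamic $L$-coloring. First I would make three normalizations. (i) $G$ is connected, because the $3$-dynamic condition is local and the components of $G$ are smaller near-triangulations. (ii) $G$ is $2$-connected and its outer cycle $C$ has no chord: a cut vertex, or a chord of $C$, splits $G$ into two smaller near-triangulations meeting in a vertex or an edge, which one colors in turn; since the second piece then arrives with an already-colored edge, I would strengthen the induction hypothesis to ``every near-triangulation with one precolored edge on its outer face, consistent with $L$, has a $3$-dynamic $L$-coloring extending it'', a form that also helps later. (iii) $\delta(G)\ge 3$: a vertex $v$ of degree at most $2$ lies on $C$ with its two neighbors adjacent, so $G-v$ is a smaller near-triangulation; given a $3$-dynamic $L$-coloring of $G-v$, color $v$ with a color of $L(v)$ avoiding the at most $4$ colors coming from $N_G(v)$ and from the at most two neighbors of $v$ that drop to degree $2$ (their $3$-dynamic condition is then restored automatically, as in $G-v$ they already saw two colors), the condition at $v$ holding because its neighbors are pairwise adjacent.

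For the structural heart, Euler's formula gives $\sum_{v\in V(G)}(\deg_G(v)-6)=-2\ell-6<0$, where $\ell$ is the length of $C$ (all inner faces being triangles, the face side of Euler's formula reduces to the outer face); hence $G$ has vertices of degree $3$, $4$, or $5$. A discharging argument then forces $G$ to contain one of a short list of reducible configurations, each organized around such a low-degree vertex $v$ together with mild degree lower bounds on the neighbors of $v$. For each configuration I would delete $v$; if $v$ was an interior vertex this leaves a $(\deg_G v)$-gon, which I re-triangulate with chords among $N_G(v)$ chosen --- using that planarity sharply restricts which of these chords can already be present --- so as to keep the graph a near-triangulation while being incident to as few ``fragile'' neighbors of $v$ as possible. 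Minimality supplies a $3$-dynamic $L$-coloring of the resulting smaller near-triangulation; I then reinsert $v$ and color it. Its color has to avoid the at most $5$ colors on $N_G(v)$ together with, for each neighbor $u$ whose degree dropped, one further color chosen to repair the $3$-dynamic condition at $u$ (in the reduced graph $u$ still saw at least two colors, so a single color at $v$ suffices for $u$); the configurations are designed precisely so that this total stays below $6$. The $3$-dynamic condition at $v$ itself comes for free: the neighbors of an interior vertex induce a cycle, which is properly colored with at least $3$ colors when it is odd, and in the remaining small cases ($\deg_G v=4$, or $v$ lying on $C$) the chord inserted before the deletion makes three neighbors of $v$ pairwise adjacent, hence forces three colors among them.

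The main obstacle is exactly this last bookkeeping. Re-triangulating after deleting an interior vertex of degree $4$ or $5$ may require chords incident to neighbors of $v$ whose dynamic condition is already tight, and each such incidence both eats into the ``repair budget'' at $v$ and can itself damage that neighbor's condition in $G$; meanwhile deleting $v$ can threaten the condition at several neighbors simultaneously. Keeping the number of forbidden colors at $v$ strictly below $6$ while preserving the near-triangulation structure is what dictates which low-degree configurations may be declared reducible, and hence what the discharging phase must output --- so the genuine work is the case-by-case design of the reducible configurations and of the matching discharging rules, with the strengthened precolored-edge hypothesis absorbing the connectivity and chord issues along the way.
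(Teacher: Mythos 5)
Your proposal is a programme, not a proof: the step you yourself identify as ``the genuine work'' --- producing the explicit list of reducible configurations around vertices of degree $3$, $4$, $5$, designing the matching discharging rules, and verifying for each configuration that the number of forbidden colors at the reinserted vertex stays below $6$ while the repaired neighbors' conditions survive the removal of the auxiliary chords --- is never carried out. Saying ``the configurations are designed precisely so that this total stays below $6$'' is the conclusion one would have to prove, not a design principle one may assume. In addition, your normalization (ii) has a real gap: when you split $G$ along a chord (or cut vertex) of the outer cycle and color the two pieces separately, the $3$-dynamic condition at the chord endpoints is not determined within either piece. An endpoint $x$ may have degree $2$ in each piece, so each piece only guarantees $2$ colors in the part of $N_G(x)$ it sees, and the two pairs of colors can overlap so that $|\phi(N_G(x))|=2<\min\{3,\deg_G(x)\}$ in $G$. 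Precoloring the shared edge does not control this; you would need to carry along information about the colors already seen by the chord endpoints, which your strengthened hypothesis does not do.

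For comparison, the paper's proof of Theorem~\ref{NT} (from which the corollary is immediate, as you say) needs none of this machinery. From a minimal counterexample it proves only two degree claims --- every vertex of the outer cycle has degree at least $4$, and every interior vertex has degree at least $6$ --- by deleting one low-degree vertex, re-triangulating if necessary, and counting at most $5$ forbidden colors when the vertex is put back. These two claims give $2e(G)\geq 4t+6k$, while the near-triangulation structure gives $e(G)=2t+3k-3$, an immediate contradiction. So no catalogue of reducible configurations, no discharging rules, and no chord/cut-vertex decomposition (hence no precolored-edge strengthening) are required; the global edge count replaces the entire discharging phase you left unspecified. If you want to salvage your route, the honest remaining task is exactly the case analysis you deferred, and you would still have to repair the gluing step or avoid it as the paper does.
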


Let $W_n$ be the wheel with $n+1$ vertices such that $W_n$ is obtained from an $n$-cycle by adding a new vertex $u$ and joining $u$ and every vertex on the $n$-cycle.
The following can be easily checked.
\begin{prop}\label{prop1}
$ch_3^d(W_n)\leq 6$ for every positive integer $n\geq3$ and $ch_3^d(W_5)=6$.
\end{prop}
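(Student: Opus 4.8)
\noindent\textbf{Proof sketch}.\quad
Write $u$ for the hub of $W_n$ and let $v_1v_2\cdots v_nv_1$ be the rim cycle, so that $\deg_{W_n}(u)=n$ and $\deg_{W_n}(v_i)=3$ for every $i$. The first step I would carry out is to reduce the $3$-dynamic requirement to an ordinary colouring condition on the rim. Since each $v_i$ has degree $3$, a $3$-dynamic colouring $\phi$ must give the three neighbours $v_{i-1},v_{i+1},u$ of $v_i$ three distinct colours, and because $\phi$ is proper this is equivalent to $\phi(v_{i-1})\neq\phi(v_{i+1})$ for all $i$. Hence a proper colouring $\phi$ of $W_n$ is $3$-dynamic at every rim vertex precisely when its restriction to the rim is a proper colouring of the square $C_n^2$ of the rim cycle (whose extra edges are the chords $v_iv_{i+2}$); and any such colouring is automatically $3$-dynamic at $u$ too, since $v_1v_2v_3$ spans a triangle of $C_n^2$ for every $n\geq 3$, so $N_{W_n}(u)$ already receives at least three colours.

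For the upper bound I would use a greedy colouring. Let $L$ be a list assignment with $|L(w)|\geq 6$ for all $w$. Colour $u$ with an arbitrary $c\in L(u)$, and then colour $v_1,v_2,\dots,v_n$ in this order, choosing for $v_i$ any colour of $L(v_i)$ different from $c$ and from the colours already used on the rim vertices adjacent to $v_i$ in $C_n^2$. The point is that when $v_i$ is reached, at most two of its $C_n^2$-neighbours have been coloured so far, the only exceptions being $v_{n-1}$ (which sees three) and $v_n$ (which sees all four); so at most $1+4=5$ colours are ever forbidden, and since $|L(v_i)|\geq 6$ a colour is always available. By the reduction above, the resulting colouring is a $3$-dynamic $L$-colouring of $W_n$, so $ch_3^d(W_n)\leq 6$. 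The small cases $n\in\{3,4\}$, where some of the indices coincide modulo $n$, only make the forbidden sets smaller (or can be dispatched directly, using $W_3=K_4$).

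For the lower bound $ch_3^d(W_5)\geq 6$ I would write down the obstruction explicitly: assign every vertex of $W_5$ the list $\{1,2,3,4,5\}$. In $W_5$ the rim is a $5$-cycle, and on five vertices the cycle edges together with the chords $v_iv_{i+2}$ exhaust all $\binom{5}{2}$ pairs; hence in any $3$-dynamic colouring the five rim vertices receive five distinct colours and the hub a sixth colour distinct from all of them, which is impossible with only five colours. So $W_5$ admits no $3$-dynamic $L$-colouring, giving $ch_3^d(W_5)\geq 6$, and together with the upper bound $ch_3^d(W_5)=6$.

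I do not expect a genuine obstacle here; the proposition really is routine. The only place that needs a little care is the bookkeeping in the greedy step --- essentially the observation that the single rim vertex $v_n$ which sees four previously coloured neighbours still has list size $6>1+4$ --- together with checking the small-$n$ index collisions in $C_n^2$. The list assignment used for $W_5$ is the natural tightness example, reflecting that $C_5^2=K_5$ and $ch(K_5)=5$.
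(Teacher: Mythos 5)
Your proof is correct: the reduction of the $3$-dynamic condition at the rim vertices to a proper colouring of $C_n^2$, the greedy bound $1+4=5<6$ (with the small-$n$ index collisions noted), and the $C_5^2=K_5$ obstruction forcing six colours on $W_5$ are all sound. The paper offers no argument for this proposition (it only says it ``can be easily checked''), and your write-up is exactly the routine verification that remark points to, so there is nothing to contrast with.
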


Note that Proposition \ref{prop1} and Theorem \ref{NT} imply that the upper bound of list 3-dynamic chromatic number of near triangulations is tight.
And Corollary \ref{T-main} implies that
\[
5 \leq \mbox{ max } \{ch_3^d(G) : G \mbox{ is a triangulated planar graph }\} \leq 6.
\]

Thus it would be interesting to answer the following question.

\begin{question}
Is it true that $ch_3^d(G) \leq 5$ if $G$ is a triangulated planar graph? 
\end{question}

\bigskip
\section{Proof of Theorem \ref{NT}}
Suppose that Theorem \ref{NT} does not hold, and let
$G$ be a minimal counterexample to Theorem \ref{NT}.
Let $C: v_1v_2\cdots v_t v_1$ in counter-clockwise order be the boundary of the outer face of a plane graph $G$. If $|V(G)|\leq6$, then it is easy to obtain $ch_3^d(G)\leq 6$, a contradiction. Hence we have $|V(G)|\geq7$.

First, we prove the following Claim.

\begin{claim}\label{cla1-6} 
For any $v \in V(C)$, we have that  $d_G(v) \geq 4$.
\end{claim}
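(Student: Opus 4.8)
The plan is to contradict the minimality of $G$. Suppose some $v\in V(C)$ has $d_G(v)\le 3$, and fix a list assignment $L$ with $|L(u)|\ge 6$ for all $u$ under which $G$ has no $3$-dynamic $L$-coloring. I will delete $v$ (and, when $d_G(v)=3$ and the two neighbors of $v$ on $C$ are non-adjacent, also add the edge between them) to obtain a near-triangulation $G'$ on $|V(G)|-1\ge 6$ vertices. By the minimality of $G$, together with the fact recorded above that every graph on at most $6$ vertices has list $3$-dynamic chromatic number at most $6$, the graph $G'$ has a $3$-dynamic $L$-coloring $\psi$. I then extend $\psi$ to $v$, obtaining a $3$-dynamic $L$-coloring of $G$, a contradiction. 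The only constraints that can fail after putting $v$ back are the dynamic conditions at $v$ and at the neighbors of $v$, so the whole argument reduces to showing that these forbid few enough colors for $\psi(v)$.

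The structural input comes from near-triangulation. If $d_G(v)=2$ with $C$-neighbors $x,y$, the unique bounded face incident to $v$ is the triangle $vxy$, so $xy\in E(G)$, and $G':=G-v$ is a near-triangulation in which $vxy$ has merged into the outer face. If $d_G(v)=3$, let $x,y$ be the $C$-neighbors of $v$ and $w$ its third neighbor; the two bounded faces incident to $v$ are the triangles $vxw$ and $vyw$, so $xw,yw\in E(G)$. Set $G':=G-v$ if $xy\in E(G)$, and $G':=(G-v)+xy$ otherwise; in the second case $x,w,y$ lie consecutively on the outer boundary of $G-v$, so the chord $xy$ can be drawn in the outer face, creating the triangular face $xwy$ and leaving a near-triangulation. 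In either case $x$ and $y$ are adjacent in $G'$, so $\psi(x)\ne\psi(y)$; the degenerate case $d_G(v)=1$ is treated the same way with $G':=G-v$.

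Now extend $\psi$ to $v$. For a vertex $u\ne v$ not adjacent to $v$, the dynamic condition at $u$ in $G$ agrees with that in $G'$ and is inherited from $\psi$, so only $v$ and its neighbors matter. The dynamic condition at $v$ itself is automatic for the chosen $G'$: for $d_G(v)=2$ it is $\psi(x)\ne\psi(y)$, and for $d_G(v)=3$ the colors $\psi(x),\psi(y),\psi(w)$ are pairwise distinct since $xw,yw\in E(G)$ and $\psi(x)\ne\psi(y)$. For a neighbor $u$ of $v$, the dynamic condition at $u$ can be violated only when $d_G(u)\ge 3$ and $N_G(u)\setminus\{v\}$ currently carries exactly two colors, and then it suffices that $\psi(v)$ avoids those two colors. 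The gain is that these ``repair'' pairs overlap strongly with the colors on $N_G(v)$: for $d_G(v)=3$, the color $\psi(w)$ lies in $\psi(N_G(x)\setminus\{v\})$ and in $\psi(N_G(y)\setminus\{v\})$, while $\psi(x),\psi(y)$ are the only colors in $\psi(N_G(w)\setminus\{v\})$; an analogous overlap occurs for $d_G(v)\in\{1,2\}$. Tallying, $\psi(v)$ must avoid at most three colors from $N_G(v)$ and at most two further colors from the dynamic conditions at the neighbors of $v$, hence at most $5<6\le|L(v)|$ colors; any remaining color completes the extension and contradicts the choice of $G$.

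The step I expect to be the main obstacle is the case $d_G(v)=3$. One must verify carefully that $(G-v)+xy$ is genuinely a near-triangulation — that $xy\notin E(G)$ is exactly the situation in which the chord can be inserted, that nothing goes wrong when the ``interior'' neighbor $w$ happens to lie on $C$, and that the count above survives — and this count is tight: it genuinely uses that passing to $G'$ forces $\psi(x)\ne\psi(y)$, since otherwise the dynamic condition at $v$ could fail no matter how $\psi(v)$ is chosen. One also has to dispose of the degenerate situations in which the boundary of the outer face is not a simple cycle (for instance when $G$ has a cut vertex), but these are handled by analogous, simpler reductions.
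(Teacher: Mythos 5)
Your proposal is correct and follows essentially the same route as the paper's proof: delete the low-degree boundary vertex, add the chord between its two $C$-neighbors when $d_G(v)=3$ and they are non-adjacent so as to stay inside the class of near-triangulations, invoke minimality to color the smaller graph, and then extend by counting at most $3+1+1=5$ forbidden colors, using exactly the same overlap (the common neighbor $w$, the paper's $u_2$) to bound the repair colors at the two $C$-neighbors. The only differences are cosmetic: you additionally remark on the degenerate cases ($d_G(v)=1$, non-simple outer boundary), which the paper implicitly excludes by assuming the outer boundary is a cycle.
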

\begin{proof}
Suppose that there is a vertex $v_k \in V(C)$ with $d_G(v_k)\leq3$. Let $u_1,u_2,\ldots, u_s $ denote the neighbors of $v_k$ in counter-clockwise order.
And let $u_1=v_{k+1}$  ($k+1$ are computed by module $t$).

If $d_G(v_k)=2$ or $d_G(v_k)=3$ with $u_1u_3\in E(G)$, then we remove $v_k$ from $G$ and call the resulting graph by $G'$.  If $d_G(v_k)=3$ and $u_1u_3\notin E(G)$, then we remove $v_k$ from $G$ and add the edge $u_1u_3$ in the outer face, and call the resulting graph by $G'$. 

Let $L'(v) = L(v)$ for every $v \in V(G')$.  
Since $G$ is a minimal counterexample, $G'$ has a $3$-dynamically $L'$-coloring $\phi$.  

If $d_G(v_k)=2$, then there exists a vertex $u_0$ such that $u_0\in (N_G(u_1)\cap N_G(u_2))\setminus\{v_k\} $ since $|V(G)|\geq 7$.
Then we color $v_k$ by a color $c \in L(v_k) \setminus \{\phi(u_0), \phi(u_1),\phi(u_2)\}$, and we obtain that
$G$ has a 3-dynamic coloring from the list assignment $L$, a contradiction.

If $d_G(v_k)=3$, then the vertices $u_1$, $u_2$ and $u_3$ receive distinct colors under the coloring $\phi$. Suppose $u_1u_3\in E(G)$. Then we color $v_k$ by a color $c \in L(v_k) \setminus \{\phi(u_1), \phi(u_2),\phi(u_3)\}$, and we obtain that $G$ has a 3-dynamic coloring from the list assignment $L$. This is a contradiction. Hence suppose that $u_1u_3\notin E(G)$.  If there is a vertex $u_i$ for $i\in\{1, 3\}$ such that $\phi(N_G(u_i))$  has at most two different colors, then we must color $v_k$ by a color $c \in L(v_k) \setminus (\phi(N_G(v_k))\cup \phi(N_G(u_i)))$ so that vertex $u_i$ satisfies the conditions of 3-dynamic coloring. Then one can easily check that the number of forbidden colors at $v_k$ is at most 5 as follows.

Let $S$ be the set consisting of the forbidden colors at $v_k$.
If $|\phi(N_G(u_1))|\geq3$ and $|\phi(N_G(u_3))|\geq3$, then  $S=\{\phi(u_1), \phi(u_2),\phi(u_3)\}$. If $|\phi(N_G(u_i))|\leq2$ and $|\phi(N_G(u_j))|\geq3$ for $\{i,j\}=\{1,3\}$, then  $S=\{\phi(u_1), \phi(u_2),\phi(u_3)\}\cup \phi(N_G(u_i))$ . If $|\phi(N_G(u_1))|\leq2$ and $|\phi(N_G(u_3))|\leq2$, then $S=\{\phi(u_1), \phi(u_2),\phi(u_3)\}\cup \phi(N_G(u_1))\cup\phi(N_G(u_3))$. Since $u_2\in N_G(u_1)\cap N_G(u_3)$, we can easily obtain $|S|\leq5$ for all cases above.

Thus  we can color $v_k$ by a color $c \in L(v_k)$ so that $G$ has a 3-dynamic coloring from the list assignment $L$, and it implies that $G$ is  3-dynamically $L$-colorable.  This is a contradiction, which completes the proof of Claim \ref{cla1-6}.
\qed
\end{proof}

\bigskip
Next, we prove the following Claim.

\begin{claim}\label{cla2-6} 
For any $w\in V(G)\setminus V(C)$, we have that  $d_G(w) \geq 6$.
\end{claim}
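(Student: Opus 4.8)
The plan is to argue by contradiction using the minimality of $G$, exactly in the spirit of Claim~\ref{cla1-6}. Suppose some interior vertex $w \in V(G) \setminus V(C)$ has $d_G(w) \le 5$. Since every bounded face of $G$ is a triangle and $w$ is interior, the neighbors $u_1, u_2, \ldots, u_s$ of $w$ (in counter-clockwise order) form a cycle $u_1 u_2 \cdots u_s u_1$ around $w$, where $s = d_G(w) \le 5$; in particular $s \ge 3$ because $|V(G)| \ge 7$ and $G$ is $2$-connected (indeed every bounded face is a triangle). We delete $w$ from $G$, obtaining a plane graph $G'$ whose only non-triangular face is the (at most $5$-gon) region $u_1 u_2 \cdots u_s$. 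To keep the inductive hypothesis applicable we must restore near-triangulation structure: we triangulate that face by adding at most $s-3 \le 2$ chords inside it among $u_1, \ldots, u_s$, avoiding chords that already exist, and call the result $G'$ (note $G'$ is still a near-triangulation, and $|V(G')| < |V(G)|$). Set $L'(v) = L(v)$ for all $v \in V(G')$; by minimality $G'$ has a $3$-dynamic $L'$-coloring $\phi$.

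Now extend $\phi$ to $w$. The constraints are: $\phi(w) \notin \{\phi(u_1), \ldots, \phi(u_s)\}$ (proper, and this also forces at least $\min\{3, s\}$ colors into $N_G(w)$ once we check the $u_i$'s are suitably colored); and for each neighbor $u_i$ whose neighborhood in $G$ sees fewer than $3$ colors under $\phi$, we additionally need $\phi(w)$ to differ from all colors on $N_G(u_i)$ so that $u_i$ regains a third color. The key quantitative point is a bound on the number $|S|$ of forbidden colors at $w$. Since $|L(w)| \ge 6$, it suffices to show $|S| \le 5$. A crude count gives $|S| \le s + \sum (\text{deficient } u_i)$, which is too weak for $s = 5$; so the heart of the argument is a sharper accounting that exploits the cycle structure $u_1 \cdots u_s$ around $w$: consecutive $u_i, u_{i+1}$ are adjacent (in $G$ or after adding a chord) and share $w$ as a common neighbor, so their neighborhoods overlap heavily, and a $u_i$ that is color-deficient in $G$ was already deficient-or-nearly-so in $G'$, which constrains $\phi$ there.

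The main obstacle, and where I expect to spend the real work, is the case $s = 5$ (and to a lesser extent $s = 4$): here the naive bound allows $|S|$ as large as $5 + (\text{several})$, so I must show that not too many of the $u_i$ can simultaneously be color-deficient, and that when several are, their forbidden-color sets collapse onto the small set $\{\phi(u_1), \ldots, \phi(u_s)\} \cup \{\phi(w')\}$ for a few extra vertices $w'$. Concretely: if $u_i$ is deficient in $G$, then in $G'$ the vertex $u_i$ lost a neighbor (namely $w$) or a color, so $|\phi(N_{G'}(u_i))| \le 2$; combined with $d_{G'}(u_i) \ge d_G(u_i) - 1 \ge 3$ (Claim~\ref{cla1-6} gives $d_G \ge 4$ on $C$, and interior vertices other than $w$ have large degree once Claim~\ref{cla2-6} is in hand for them — here one uses a careful minimal-counterexample ordering, or simply that $u_i$ adjacent to $w$ has other neighbors forming its own fan), the repeated colors in $N_{G'}(u_i)$ pin down $\phi$ tightly. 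I would split into subcases by how many of the $u_i$ are deficient ($0$, $1$, $2$, or $3^+$), using planarity (two deficient $u_i, u_j$ that are non-adjacent still have $w$ and at least one more common neighbor among the fan) to merge their forbidden sets, and in each subcase verify $|S| \le 5$, yielding a color for $w$ in $L(w) \setminus S$ and the desired contradiction. This completes the proof of Claim~\ref{cla2-6}.
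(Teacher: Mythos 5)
Your setup coincides with the paper's: delete the low-degree interior vertex $w$, re-triangulate the resulting $s$-gonal face ($s\le 5$) with at most two chords so that the smaller graph is again a near-triangulation, invoke minimality to obtain a $3$-dynamic $L$-coloring $\phi$, and then extend $\phi$ to $w$ by showing the set $S$ of forbidden colors at $w$ has size at most $5$. But the one step that carries all the difficulty --- the verification that $|S|\le 5$ when $d_G(w)\in\{4,5\}$ --- is precisely the step you do not carry out: you state that the crude bound is too weak, announce that ``the heart of the argument is a sharper accounting,'' and describe a case split on the number of color-deficient neighbors that you ``expect'' to close using overlaps of their neighborhoods. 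As written, this is an outline with the key lemma missing, and the bookkeeping you propose (forbid \emph{all} of $\phi(N_G(u_i))$ for every deficient $u_i$) is exactly the lossy count that fails for $s=5$; nothing in the proposal replaces it with a working estimate.

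The paper closes this gap with a different and much lighter accounting that your sketch never reaches. Since the link of $w$ is a cycle $u_1\cdots u_s$ on which $\phi$ is proper (consecutive $u_i$ are adjacent, and $\phi$ also respects the added chords), a neighbor $u_i$ can be at risk after coloring $w$ only if $\phi(u_{i-1})=\phi(u_{i+1})$; and for such a $u_i$ it suffices to forbid at $w$ a \emph{single} extra color, namely $\phi(u_i')$ for one suitably chosen $u_i'\in N_G(u_i)\setminus\{w,u_{i-1},u_{i+1}\}$, which exists because $d_G(u_i)\ge 4$. Properness on the triangulated link forces at most two indices $i$ with $\phi(u_{i-1})=\phi(u_{i+1})$, and every such coincidence is a repeated color among $\{\phi(u_1),\dots,\phi(u_s)\}$, so the number of distinct neighbor colors plus the number of at-risk vertices never exceeds $5$; then $|L(w)|\ge 6$ finishes. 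This characterization (``at risk iff the two fan-neighbors share a color'') together with ``one extra forbidden color per at-risk neighbor'' is the missing idea. Two smaller points: the case $d_G(w)=3$ should be treated first and separately (the three neighbors already form a triangle, no chords are needed, and only three colors are forbidden), and your justification that $d_{G'}(u_i)\ge 3$ by appealing to Claim~\ref{cla2-6} for the other interior vertices is circular unless, as the paper implicitly does, interior vertices of degree at most $3$ are disposed of first so that one may assume $d_G(v)\ge 4$ for all $v$.
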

\begin{proof}
Suppose that there is a vertex $w$ with $d_G(w) \leq 5$.
Let $w_1,w_2,\ldots,w_s $ denote the neighbors of $w$ in counter-clockwise order.

Suppose $d_G(w)=3$.  We remove $w$ from $G$ and call the resulting graph by $G'$. 
Let $L'(v) = L(v)$ for every $v \in V(G')$. 
Since $G$ is a minimal counterexample, $G'$ has a $3$-dynamically $L'$-coloring $\phi$.   So, we can color $w$ by a color $c \in L(w)\setminus\phi(N_G(w))$ so that $G$ has a 3-dynamic coloring from the list assignment $L$ since $|L(v)| \geq 6$ for each $v \in V(G)$, a contradiction.

Now we suppose $4\leq d_G(w)\leq 5$. With Claim \ref{cla1-6}, we suppose $d_G(v)\geq 4$ for each $v \in V(G)$.
Then we remove $w$ from $G$ and add edges in the face formed by $\{w_1,w_2,\ldots,w_s \}$ so that the resulting graph, denoted  by $G'$, is a near-triangulation. 
Let $L'(v) = L(v)$ for every $v \in V(G')$.  
Since $G$ is a minimal counterexample, $G'$ has a $3$-dynamically $L'$-coloring $\phi$.   
If $N_G(w)  = \{w_1, w_2, \ldots, w_s\}$ has all different colors in the coloring $\phi$, then we color $w$ by a color $c \in L(w) \setminus \{\phi(w_i) : 1 \leq i \leq s\}$.  Then this gives a 3-dynamic coloring from its list assignment $L$.

Next, we consider the case when $N_G(w)  = \{w_1, w_2, \ldots, w_s\}$ have a commen color.  If $w_{i-1}$ and $w_{i+1}$ have a common color in the coloring $\phi$ for some $i \in [s]$ ($i-1$ and $i+1$ are computed by module $s$),  then we color $w$ by a color $c \in L(w) \setminus \{\phi(w_i^{'})\}$ for some $w_i^{'} \in N_G(w_i) \setminus \{w,w_{i-1},w_{i+1}\}$  so that vertex $w_i$ satisfies the condition of 3-dynamic coloring.
It means that we select a neighbor $w_i^{'}$ of $w_i$ and forbid $\phi(w_i^{'})$ at $w$.

Since $d_G(w)  = 4$ or  $d_G(w)  = 5$, then there are at most two vertices $w_i$ such that $\phi(w_{i-1}) = \phi(w_{i+1})$ ($i-1$ and $i+1$ are computed by module $s$).  Then the number of forbidden colors at $w$ is at most 5.

So, 
we can color $w$ by a color $c \in L(w)$ so that $G$ has a 3-dynamic coloring from the list assignment $L$ since $|L(v)| \geq 6$ for each $v \in V(G)$.
Thus $G$ is $3$-dynamically $L$-colorable, which is a contradiction since $G$ is a counterexample.  This completes the proof of Claim \ref{cla2-6}. \qed
\end{proof}

\bigskip
Let $k$ be the number of vertices in $V(G) \setminus V(C)$.  Then $n(G) = t + k$ since $|V(C)| = t$.
Now from Claim \ref{cla1-6} and Claim \ref{cla2-6}, we have
\begin{equation} \label{eq-1}
2e(G)  = \sum_{v \in V(G)} d_G(v) = \sum_{v \in V(C)} d_G(v) + \sum_{v \in V(G)\setminus V(C)} d_G(v) \geq 4t + 6k.
\end{equation}

And since $G$ is a near-triangulation, we have 
\begin{equation} \label{eq-2}
e(G) = 3n(G) - 6 - (|V(C)| - 3) = 3n(G) - t - 3 = 2t + 3k - 3.
\end{equation}
So, by (\ref{eq-1}) and (\ref{eq-2})
\[
4t + 6k - 6   = 2e(G) \geq 4t + 6k \ \Longrightarrow -6 \geq 0,
\]
which is a contradiction.  This completes the proof of Theorem \ref{NT}.
\qed

\bigskip

\noindent {\bf Acknowledgments.}\
Y. Ma and Y. Shi are partially supported by National Natural Science Foundation of China,
Natural Science Foundation of Tianjin (No. 17JCQNJC00300), the China-Slovenia bilateral project ``Some topics in modern graph theory" (No.~12-6),
Open Project Foundation of Intelligent Information Processing Key Laboratory of Shanxi Province (No. CICIP2018005),
and the Fundamental Research Funds for the Central Universities, Nankai University (63191516).
S.-J. Kim's work was supported by the National Research Foundation of Korea (NRF) grant funded by the Korea government (MSIT) (NRF-2018R1A2B6003412). 
\\[0.5cm]

\bigskip

\end{document}